\newtheorem{dfn}{Definition}
\newtheorem{teo}{Theorem}
\newtheorem{cor}[teo]{Corollary}
\newcommand{\sinc}{\operatorname{sinc}}
\DeclareMathOperator*{\esup}{ess\,sup}
\DeclareMathOperator*{\einf}{ess\,inf}
\title{{\bf Sampling in the range of the analysis operator of a continuous frame having unitary structure}\thanks{The author is very pleased to dedicate this work to his friend, the mathematician Lance L. Littlejohn on the occasion of his 70th birthday. }}
\author{
{\bf Antonio G. Garc\'{\i}a}\thanks{E-mail:\texttt{agarcia@math.uc3m.es}}
}
\date{}
\begin{document}
\maketitle
\begin{itemize}
\item[] Departamento de Matem\'aticas, Universidad Carlos III de Madrid, Spain.
\end{itemize}
\begin{abstract}
We establish a regular sampling theory in the range of the analysis operator of a continuous frame having a unitary structure. The unitary structure is related with a unitary representation of a locally compact abelian group on a separable Hilbert space. The samples are defined by means of suitable discrete convolution systems which generalize some usual sampling settings; here regular sampling means that the  samples are taken at a countable discrete subgroup.
\end{abstract}
{\bf Keywords}: Continuous and discrete frames; Convolution systems; Sampling.

\noindent{\bf AMS}: 42C15; 94A20; 22B05; 20H15.
\section{Statement of the problem}
\label{section1}
In this paper a regular sampling theory in the range space of the analysis operator of a continuous frame is established. To be more specific, the continuous frame has a unitary structure associated with a unitary representation $t\in G \mapsto U(t)$ of an LCA (locally compact abelian) group $G$ on a separable Hilbert space $\mathcal{H}$.  The functions to be recovered, at a countable discrete subgroup $H$ of $G$, are associated with functions in a unitary $H$-invariant subspace $\mathcal{H}_\Phi$ of $\mathcal{H}$ (that, eventually, could coincide with $\mathcal{H}$); the subscript $\Phi=\{ \varphi_1, \varphi_2, \dots, \varphi_N \} \subset \mathcal{H}$ denotes a set of stable generators for $\mathcal{H}_\Phi$. Thus, the functions to be recovered from a sequence of samples at $H$ look like: 
\[
F_f(t)=\big\langle f, U(t)\phi \big\rangle_\mathcal{H}\,, \,\, t\in G\,, \, \text{ where $f\in\mathcal{H}_\Phi $}\,.
\]
The set of these functions $F$ (we omit the subscript) forms a reproducing kernel Hilbert space (in short RKHS) of  continuous and bounded functions contained in 
$L^2(G)$ that will be denoted as $\mathcal{H}_{U,\phi,\Phi}$. For some examples of these spaces we cite, among others,  the Paley-Wiener spaces $PW_{\pi\sigma}$, shift-invariant subspaces $V_\Phi^2$ in $L^2(\mathbb{R}^d)$ and, in the non-abelian case, the range space of the continuous wavelet or Gabor transforms.

\medskip

The goal in this work is to recover, in a stable way, any $F$ in $\mathcal{H}_{U,\phi,\Phi}$ from a sequence of samples taken at the subgroup 
$H$. For instance, for the sequence of {\em pointwise samples} $\{F(t)\}_{t\in H}$  we obtain a suitable expression as the output of a discrete convolution system (see Eq.~\eqref{s1} in Section \ref{section3})
\[
F(t)=\sum_{n=1}^N \big(a_n \ast_{_H} x_n\big)(t)\,, \,\, t\in H\,,
\]
where $a_n(t):=\big\langle \varphi_n, U(t)\phi\big\rangle_\mathcal{H}$, $t\in H$,  belongs to 
$\ell^2(H)$ for $n=1, 2,\dots, N$, $x_n\in \ell^2(H)$, $n=1, 2,\dots, N$, are the coefficients of the expansion $f=\sum_{n=1}^N\sum_{t\in H} x_{n}(t) U(t)\varphi_{n}$, and $\ast_{_H}$ denotes the convolution in $\ell^2(H)$. The same occurs for the {\em average samples} $\{\mathcal{M}_m F(t)\}_{t\in H}$ defined by
\[
\mathcal{M}_mF(t):=\big\langle f, U(t)\psi_m\big\rangle_\mathcal{H},\,\, t\in H\,, \text{ for some fixed $\psi_m\in \mathcal{H}$}\,.
\]
The above examples lead us to define, for any $F$ in the space 
$\mathcal{H}_{U,\phi,\Phi}$, a sequence of {\em generalized samples} $\{\mathcal{L}_m F(t)\}_{t\in H;\,m=1,2,\dots,M}$, at the subgroup $H$, by means of an $M\times N$ matrix $A=[a_{m,n}]$ with entries in  $\ell^2(H)$ as
\[
\mathcal{L}_mF(t):=\sum_{n=1}^N \big(a_{m,n}\ast_{_H} x_n\big)(t)\,, \quad t\in H\,, \quad m=1,2, \dots,M\,.
\]
Thus, under appropriate conditions (see Definition \ref{sampdef} in Section \ref{section3}), the main sampling result (see Theorem \ref{sampteo} Section \ref{section4}) proves that  there exist $M$ sampling functions $S_m$ in $\mathcal{H}_{U,\phi,\Phi}$, $m=1, 2,\dots, M$, such that the sequence $\{S_m(\cdot-t)\}_{t\in H;\,m=1, 2,\dots,M}$ is a frame for 
$\mathcal{H}_{U,\phi,\Phi}$, and the sampling expansion
\[
F(s)=\sum_{m=1}^M\sum_{t\in H} \mathcal{L}_m F(t)\,S_m(s-t)\,, \quad s\in G\,,
\]
holds for every $F\in \mathcal{H}_{U,\phi,\Phi}$. In addition, the sampling functions $S_m$, $m=1, 2,\dots,M$, can be obtained, via the matrix $A$, by means of an explicit method (see in the end of Section \ref{section3}). 

\medskip

The used mathematical techniques are similar to those in Ref.~\cite{garcia:19}. They lie in exploiting the relationship between discrete convolution systems and frames of translates in the product Hilbert space $\ell^2_{_N}(H):=\ell^2(H)\times  \dots \times \ell^2(H)$ ($N$ times); this is an auxiliary space isomorphic to  $\mathcal{H}_{U,\phi,\Phi}$.

\medskip

The work is organized as follows: In Section \ref{section2} we include some needed preliminaries on continuous and discrete frames; on Fourier analysis for a countable discrete group, and on convolution systems in the Hilbert space 
$\ell^2_{_N}(H)$.  It is worth to mention the relationship between convolution systems in $\ell^2_{_N}(H)$ and frames of translates in $\ell^2_{_N}(H)$ showing the equi\-va\-lence of their properties. The needed results have been borrowed from Refs.~\cite{garcia:19,gerardo:19}.
Section \ref{section3} is devoted to introduce the subspace of $L^2(G)$ where the sampling theory will be carried out. Finally,  Section \ref{section4} contains the main sampling result along with some pertinent comments and remarks. Although the work deals with abelian groups, an example involving semi-direct products of groups is included; this particular non-abelian case will be treated by using the theory developed in Section \ref{section4}. 

It should be noted that working in LCA groups is not just a unified way of dealing with the classical groups 
$\mathbb{R}^d, \mathbb{Z}^d, \mathbb{T}^d, \mathbb{Z}_s^d$: signal processing often involves products of these groups which are also locally compact abelian groups. For example, multichannel video signal involves the group $\mathbb{Z}^d \times \mathbb{Z}_s$, where $d$ is the number of channels and $s$ the number of pixels of each image. Finally, some companion references in sampling theory are, for instance, Refs.~\cite{aldroubi:05,bhandari:12,hector:14,kang:11,pohl:12,shang:07}.
\section{Some preliminaries}
\label{section2}
\subsection{Continuous and discrete frames}
\label{section2-1}
Let $\mathcal{H}$ be a Hilbert space and let $(\Omega , \mu)$ be a measure space. A mapping $\psi:\Omega \rightarrow \mathcal{H}$ is a {\em continuous frame} for 
$\mathcal{H}$ with respect to $(\Omega , \mu)$ if $\psi$ is weakly measurable, i.e., for each $x\in \mathcal{H}$ the function $w\mapsto \langle x, \psi(w)\rangle$ is measurable, and there exist constants $0<A\le B$ such that
\begin{equation}
\label{defcf}
A\|x\|^2 \le \int_\Omega \big|\langle x, \psi(w)\rangle \big|^2 d\mu(w) \le B\|x\|^2 \quad \text{for each $x\in \mathcal{H}$}\,.
\end{equation}
The constants $A$ and $B$ are the lower and upper continuous frame bounds respectively. The mapping $\psi$ is a {\em tight continuous frame} if $A=B$; a {\em Parseval continuous frame} if $A=B=1$. The mapping $\psi$ is called a {\em Bessel family} if only the right-hand inequality holds. Throughout this paper we refer a continuous frame as the mapping $\psi:\Omega \rightarrow \mathcal{H}$, or as the family $\{\psi(w)\}_{w\in \Omega}$, or 
$\{\psi_w\}_{w\in \Omega}$, in the Hilbert space $\mathcal{H}$.

\medskip

There are a lot of examples of continuous frames in the mathematical/physics literature. For instance: the family $\{k_w\}_{w\in \Omega}$ of  reproducing kernels of a RKHS $\mathcal{H}_k$ contained in $L^2(\Omega,\mu)$ is a continuous Parseval frame with respect to $(\Omega, \mu)$;  a {\em Gabor system} $\big\{ M_\xi T_x g \,:\, (x, \xi)\in \mathbb{R}^d\times \mathbb{R}^d\big\}$ is a tight continuous frame for $L^2(\mathbb{R}^d)$ with respect to $\big(\mathbb{R}^d\times \mathbb{R}^d, dx\,d\xi \big)$, where $g\in L^2(\mathbb{R}^d)$ is a fixed non zero function, and $M_\xi$ and $T_x$ denote the modulation and translation operators in $L^2(\mathbb{R}^d)$ respectively;  a {\em wavelet system} $\big\{T_bD_a\psi \,:\, (a, b)\in (\mathbb{R}\setminus \{0\})\times \mathbb{R}\big\}$ is a tight continuous frame for $L^2(\mathbb{R})$ with respect to $\big((\mathbb{R}\setminus \{0\})\times \mathbb{R}, \dfrac{dadb}{a^2}\big)$, where $T_b$ and $D_a$ denote the translation and dilation operators in $L^2(\mathbb{R})$ respectively, and $\psi \in L^2(\mathbb{R})$ is an admissible function, i.e., a function for which the constant $c_{_\psi}:=\int_\mathbb{R} \frac{|\widehat{\psi}(w)|^2}{|w|} dw<+\infty$; {\em coherent states} in physics, etc. (see, for instance, Refs.~\cite{antoine:93,ole:16,fornasier:05}).

\medskip

The operator $T_\psi: L^2(\Omega,\mu) \rightarrow \mathcal{H}$ weakly defined for each $f\in L^2(\Omega,\mu)$ by 
\[
\langle T_\psi f, x\rangle=\int_\Omega f(w) \big\langle \psi(w), x\big\rangle d\mu(w)\,, \quad x\in \mathcal{H}\,,
\] 
is linear and bounded; it is called the {\em synthesis operator} of $\{\psi_w\}_{w\in \Omega}$. Its adjoint operator $T_\psi^*:\mathcal{H}  \rightarrow L^2(\Omega,\mu)$ is given by $(T_\psi^*\,x)(w)=\langle x, \psi(w)\rangle$, $w\in \Omega$, and it is called the {\em analysis operator} of $\{\psi_w\}_{w\in \Omega}$. The {\em continuous frame operator} $S_\psi=T_\psi \, T_\psi^*$ is a bounded, self-adjoint, positive and invertible operator in $\mathcal{H}$. For any $x\in \mathcal{H}$ we have the weak representations
\[
x=\int_\Omega  \big\langle x, \psi(w) \big\rangle S_\psi^{-1}\psi(w) d\mu(w)=\int_\Omega  \big\langle x, S_\psi^{-1}\psi(w) \big\rangle \psi(w) d\mu(w)\,.
\]
The counting measure $\mu$ on $\Omega=\mathbb{N}$ gives the classical definition of  discrete frame $\{x_n\}_{n=1}^\infty$: there exist two constants $0<A\le B$ such that
\begin{equation}
\label{defdf}
A\|x\|^2 \le \sum_{n=1}^\infty |\langle x, x_n \rangle |^2 \le B\|x\|^2 \quad \text{for each $x\in \mathcal{H}$}\,.
\end{equation}
Given a discrete frame $\{x_n\}_{n=1}^\infty$ for $\mathcal{H}$, its {\em preframe (synthesis) operator} $T: \ell^2(\mathbb{N}) \rightarrow \mathcal{H}$  is defined by $T\big(\{c_n\}_{n=1}^\infty\big)=\sum_{n=1}^\infty c_n \, x_n$. Its adjoint operator $T^* : \mathcal{H} \rightarrow \ell^2(\mathbb{N})$ is given by $T^*\,x= \{\langle x, x_n\rangle \}_{n=1}^\infty$, and it is called its {\em analysis operator}. The {\em frame operator} $S$ is defined by $S(x):=T \,T^*\,x= \sum_{n=1}^\infty \langle x, x_n\rangle \, x_n$, \,$x\in \mathcal{H}$; it is a bounded, invertible, positive and self-adjoint operator in $\mathcal{H}$. The sequence $\{S^{-1}x_n\}_{n=1}^\infty$ is also a frame for $\mathcal{H}$ called the {\em canonical dual frame}. For each $x\in \mathcal{H}$ we have the expansions
\[
x=\sum_{n=1}^\infty \langle x, x_n\rangle \, S^{-1}x_n=\sum_{n=1}^\infty \langle x, S^{-1}x_n\rangle \, x_n\,.
\]
As a consequence, given a frame $\{x_n\}_{n=1}^\infty$ for $\mathcal{H}$ the representation property of any vector $x\in \mathcal{H}$ as a series $x=\sum_{n=1}^\infty c_n \,x_n$ is retained, but, unlike the case of Riesz (orthonormal) bases, the uniqueness of this representation is sacrificed. 
Suitable frame coefficients $\{c_n\}$ which depend continuously and linearly on $x$ are obtained by using the {\em dual frames} $\{y_n\}_{n=1}^\infty$ of $\{x_n\}_{n=1}^\infty$, i.e., $\{y_n\}_{n=1}^\infty$ is another frame for $\mathcal{H}$ such that 
\begin{equation*}
\label{expansion}
x=\sum_{n=1}^\infty \langle x, y_n \rangle \,x_n=\sum_{n=1}^\infty \langle x, x_n \rangle \,y_n\quad \text{ for each $x\in \mathcal{H}$}\,.
\end{equation*} 
In particular, frames include orthonormal and Riesz bases for $\mathcal{H}$. For more details and proofs on discrete and continuous frames, see, for instance, Refs. \cite{antoine:93,ole:16,fornasier:05,gabardo:03,rahimi:06}. 

\medskip

Assume that $\{\psi(w)\}_{w\in \Omega}$ is a continuous frame for a Hilbert space $\mathcal{H}$ with respect to $(\Omega, \mu)$ such that the mapping $w\mapsto \psi(w)$ is weakly continuous, i.e., for each $x\in \mathcal{H}$ the function $w\mapsto \langle x, \psi(w)\rangle$ is continuous. Its analysis operator $T_\psi^*: \mathcal{H} \rightarrow L^2(\Omega, \mu)$ is a bounded and boundedly invertible operator on its range denoted as $\mathcal{H}_\psi :=\text{Range }T_\psi^*$. This is a closed subspace of $L^2(\Omega, \mu)$ described as the functions $F_x$ such that
\[
\begin{array}[c]{ccll}
 & \mathcal{H} & \longrightarrow & \mathcal{H}_\psi\\
       & x & \longmapsto & F_x \,\,:\,\, F_x(w)=\big\langle x, \psi(w)\big\rangle_{\mathcal{H}}\,, \quad w\in \Omega\,.
\end{array}
\]
Besides $\mathcal{H}_\psi$ is a RKHS (of continuous functions in $\Omega$) whose reproducing kernel is given by 
\[
k_\psi (u,v)=\big\langle \psi(v), S_\psi^{-1}\psi(u)\big\rangle_{\mathcal{H}}\,, \quad u,v\in \Omega\,,
\]
where $S^{-1}_\psi$ denotes the inverse of the frame operator $S_\psi$ associated to $\{\psi(w)\}_{w\in \Omega}$. That is, for each $F_x\in \mathcal{H}_\psi$ we have the reproducing property
\[
F_x(u)=\int_\Omega F_x(v)\,k_\psi (u,v)\,d\mu(v)=\big\langle F_x, k_\psi (\cdot,u) \big\rangle_{L^2(\Omega, \mu)}\,, \quad u\in \Omega\,.
\]
\subsection{Discrete convolution systems and frames of translates}
\label{section2-2}
Let $(H, +)$ be a countable discrete abelian group and let $\mathbb{T}=\{z\in \mathbb{C}: |z|=1\}$ be the unidimensional torus. A {\em character $\xi$} of $H$ is a homomorphism $\xi:H \mapsto \mathbb{T}$, i.e., $\xi(h+h')=\xi(h)\xi(h')$  for all $h,h'\in H$; we denote $\xi(h)=(h,\xi)$.  By defining $(\xi+\xi')(h)=\xi(h)\xi'(h)$,  the set of characters $\widehat{H}$ is a group, called the {\em dual group} of $H$; since $H$ is discrete, the group $\widehat{H}$ is compact \cite[Prop. 4.4]{folland:95}. 

For $x\in \ell^1(H)$  its {\em Fourier transform} is defined by
\[
\widehat{x}(\xi):=\sum_{h\in H}x(h) \overline{(h,\xi )}=\sum_{h\in H}x(h) (-h,\xi )\,,\quad \xi\in \widehat{H}\,.
\]
The Plancherel theorem extends uniquely the Fourier transform on $\ell^1(H)\cap \ell^2(H)$ to a unitary isomorphism from $\ell^2(H)$ to $L^2(\widehat{H})$. For the details see, for instance, Refs.~\cite{folland:95,fuhr:05}.

\medskip

Let $H$ be a countable discrete group and let consider the product Hilbert space $\ell^2_{_N}(H):=\ell^2(H)\times \dots \times \ell^2(H)$ ($N$ times). For a matrix $A=[a_{m,n}] \in \mathcal{M}_{_{M\times N}}\big(\ell^2(H)\big)$, i.e., an $M\times N$ matrix with entries in $\ell^2(H)$, and $\mathbf{x} \in \ell^2_{_N}(H)$, the {\em matrix convolution} $A \ast \mathbf{x}$ in $H$ is defined by
\[
(A \ast \mathbf{x})(h):=\sum_{h'\in H} A(h-h')\, \mathbf{x}(h'),\quad h\in H\,.
\]
Note that the $m$-th entry of \,$A \ast \mathbf{x}$\, is\, $\sum_{n=1}^N (a_{m,n}\ast x_{n})$, where $x_{n}$ denotes the $n$-th entry of $\mathbf{x} \in \ell^2_{_N}(H)$. The usual properties of a discrete convolution can be found in Refs.~\cite{folland:95,fuhr:05}.
 
\medskip

The discrete {\em convolution system} with associated matrix $A=[a_{m,n}] \in \mathcal{M}_{_{M\times N}}\big(\ell^2(H)\big)$ given by
\begin{equation}
\label{syscon}
\begin{array}{rccl}
	&\mathcal{A}: \ell^2_{_N}(H)  &\longrightarrow &\ell^2_{_M}(H)\\
	  & \mathbf{x} &\longmapsto &  \mathcal{A}(\mathbf{x})=A\ast \mathbf{x}
    \end{array}
\end{equation}
is a well defined bounded operator if and only if $\widehat{A}\in \mathcal{M}_{_{M\times N}}\big(L^\infty(\widehat{H})\big)$, where $\widehat{A}(\xi):=\big[\widehat{a}_{m,n}(\xi)\big]$ denotes the {\em transfer matrix} of $A$ (see Refs.~\cite{garcia:19,gerardo:19}). We identify the matrix $\widehat{A}$ with entries in $L^\infty(\widehat{\Lambda})$ and the essentially bounded matrix-valued function $\widehat{A}(\xi)$, a.e. $\xi \in \widehat{\Lambda}$.

\medskip

Its adjoint operator $\mathcal{A}^*: \ell^2_M(H) \rightarrow \ell^2_N(H)$ is also a bounded convolution system with associated matrix $A^*=\big[a^*_{m,n}\big]^\top \in  \mathcal{M}_{_{N\times M}}\big(\ell^2(H)\big)$, where $a^*_{m,n}$ denotes the involution $a^*_{m,n}(h):=\overline{a_{m,n}(-h)}$, $h\in H$ (Refs.~\cite{garcia:19,gerardo:19}). Its transfer matrix is $\widehat{A^*}(\xi)$ is just the transpose conjugate of $\widehat{A}(\xi)$, i.e., 
$\widehat{A}(\xi)^*$, a.e. $\xi \in \widehat{H}$ (Refs.~\cite{garcia:19,gerardo:19}).  

\medskip

The bounded operator $\mathcal{A}$ is injective with a closed range if and only if the operator $\mathcal{A}^*\,\mathcal{A}$ is invertible; equivalently, the constant $\delta_A:=\einf_{\xi\in \widehat{H}} \det[\,\widehat{A}(\xi)^*\widehat{A}(\xi)]>0$ (see Ref.~\cite{garcia:19}).

\medskip

Let $\mathbf{a}^*_{m}$ denote the $m$-th column of the matrix $A^*$, then the $m$-th component of  $\mathcal{A}(\mathbf{x})$ is
\[
[A\ast \mathbf{x}]_{m}(h)=\sum_{n=1}^N (a_{m,n} \ast_{_H} x_n)(h) =\big\langle \mathbf{x}, T_{h}\,\mathbf{a}^*_{m} \big\rangle_{\ell^2_{_N}(H)}\,, \quad h\in H\,,
\]
where $T_h$ denotes the translation operator by $h\in H$ in $\ell^2_{_N}(H)$, i.e., for $\mathbf{a}\in \ell^2_{_N}(H)$,  $T_{h}\,\mathbf{a}(g)=\mathbf{a}(g-h)$, $g\in H$, and $\ast_{_H}$ is the convolution indexed by $H$.
In other words, the operator $\mathcal{A}$ is the {\em analysis operator} of the sequence $\big\{T_{h} \mathbf{a}^*_{m}\big\}_{h\in H;\, m=1,2,\dots,M}$ in $\ell^2_{N}(H)$. Since the sequence $\big\{T_{h}\,\mathbf{a}^*_{m}\big\}_{h\in H;\, m=1,2,\dots,M}$  is a frame for $\ell^2_{N}(H)$ if and only if its (bounded) analysis operator is injective with a closed range (see Ref.~\cite{ole:16}). Hence, the sequence $\big\{T_{h} \mathbf{a}^*_{m}\big\}_{h\in H;\, m=1,2,\dots,M}$ will be a frame for $\ell^2_{N}(H)$ if and only if the constant $\delta_A>0$.

\medskip

Concerning the duals of $\big\{T_{h}\,\mathbf{a}^*_{m}\big\}_{h\in H;\, m=1,2,\dots,M}$ having the same structure, consider two matrices $\widehat{A}\in \mathcal{M}_{_{M\times N}}(L^\infty(\widehat{H}))$ and $\widehat{B}\in \mathcal{M}_{_{N\times M}}(L^\infty(\widehat{H}))$, and let $\mathbf{b}_{m}$ denote the $m$-th column of the matrix $B$ associated to $\widehat{B}$. Then, the sequences $\big\{T_h\,\mathbf{a}^*_{m}\big\}_{h\in H;\, m=1,2,\dots,M}$ and $\big\{T_h\,\mathbf{b}_{m}\big\}_{h\in H;\, m=1,2,\dots,M}$ form a pair of dual frames for $\ell^2_{_N}(H)$ if and only if $\widehat{B}(\xi)\,\widehat{A}(\xi)=I_{_N}$,\,\, a.e. $\xi \in \widehat{H}$; equivalently, if and only if $\mathcal{B}\,\mathcal{A}=\mathcal{I}_{\ell^2_{N}(H)}$, i.e., the convolution system 
$\mathcal{B}$ is a left-inverse of the convolution system $\mathcal{A}$ (see Ref.~\cite{garcia:19}). Thus in $\ell^2_{_N}(H)$ we have the frame expansion
\[
\mathbf{x}=\sum_{m=1}^M \sum_{h\in H} \big\langle \mathbf{x}, T_h \mathbf{a}^*_{m} \big \rangle_{\ell^2_{_N}(H)}\, T_h \mathbf{b}_{m}\quad \text{for each $\mathbf{x} \in \ell^2_{_N}(H)$}\,.
\]
Finally to remind that the convolution system $\mathcal{A}$ in \eqref{syscon} is an isomorphism if and only if $M=N$ and $\einf_{\xi \in \widehat{H}}\big|\det \widehat{A}(\xi)\big|>0$ (see Refs.~\cite{garcia:19,gerardo:19}). Thus, for the case $M=N$ the sequence  $\big\{T_h\,\mathbf{a}_m^*\big\}_{h\in H;\,m=1, 2,\dots,N}$ is a Riesz basis for $\ell_{_N}^2(H)$. The square matrix $\widehat{A}(\xi)$ is invertible, a.e. $\xi \in \widehat{H}$, and from the columns of $\widehat{A}(\xi)^{-1}$ we get its dual Riesz basis $\big\{T_h\,\mathbf{b}_m\big\}_{h\in H;\,m=1, 2,\dots,N}$.

\section{The subspace of $L^2(G)$ where the sampling is carried out}
\label{section3}
Let $G \ni t \longmapsto U(t)\in \mathcal{U}(\mathcal{H})$ be a {\em unitary representation} of a LCA group $(G,+)$ on a separable Hilbert space $\mathcal{H}$. Recall that $\{U(t)\}_{t\in G}$ is a family of unitary operators in $\mathcal{H}$ satisfying:
$U(t)\,U(t')=U(t+t')$ for $t, t'\in G$; $U(0)=I_\mathcal{H}$; and $\big\langle U(t)\varphi, \phi\big\rangle_{\mathcal{H}}$ is a continuous function of $t$ for any $\varphi, \phi \in \mathcal{H}$. Note that $U(t)^{-1}=U(-t)$, and since $U(t)^*=U(t)^{-1}$ we have $U(t)^*=U(-t)$.

\medskip

Assume that for a fixed $\phi\in \mathcal{H}$ the family $\{U(t)\phi\}_{t\in G}$ is a continuous frame for the Hilbert space $\mathcal{H}$ with respect to $(G, \mu_G)$, where $\mu_G$ denotes the Haar measure on $G$. Let $H$ be a countable discrete subgroup of $G$. For a stable set of generators $\Phi=\{\varphi_1, \varphi_2, \dots, \varphi_N \} \subset \mathcal{H}$ we consider the $U$-invariant subspace in $\mathcal{H}$ generated by $\Phi$:
\[
\mathcal{H}_{\Phi}=\Big\{ \sum_{n=1}^N\sum_{h\in H} x_{n}(h) U(h)\varphi_{n} \, :\,  x_{n}\in \ell^2(H),\,\, n=1, 2, \dots, N\Big\}\,.
\]
We are assuming that the sequence $\{U(h)\varphi_{n}\}_{h\in H;\,n=1, 2,\dots,N}$ is a Riesz sequence in $\mathcal{H}$, i.e., a Riesz basis for $\mathcal{H}_\Phi$. Finally, we define the subspace in $L^2(G)$ given by
\[
\mathcal{H}_{U,\phi,\Phi}:=\Big\{ F_f:G\longrightarrow \mathbb{C}\,\,:\,\, F_f(t)=\big\langle f, U(t)\phi\big\rangle_{\mathcal{H}}\,, \,\, t\in G\,,\text{ where $f\in \mathcal{H}_{\Phi}$} \Big\}\,.
\]
Notice that the mapping $\mathcal{H}_{\Phi} \ni f \longmapsto F_f\in \mathcal{H}_{U,\phi,\Phi}$ is an isomorphism between Hilbert spaces, and the space 
$\mathcal{H}_{U,\phi,\Phi}$ is a RKHS of continuous and bounded functions in $L^2(G)$.

\medskip

Besides, the space $\mathcal{H}_{U,\phi,\Phi}$ is an $H$-shift-invariant subspace of $L^2(G)$; indeed, for each $t\in H$ we have that the function $F_f(s-t)=F_{U(t)f}(s)$, $s\in G$, and consequently it belongs to $\mathcal{H}_{U,\phi,\Phi}$.

\medskip

For instance, if $\phi=\sinc$ is the cardinal sine function then $\{U(t)\phi= \sinc (\cdot-t)\}_{t\in \mathbb{R}}$ is a continuous frame for $\mathcal{H}=PW_\pi$ with respect to $(\mathbb{R}, dx)$ and the space $\mathcal{H}_{U,\phi,\Phi}$ coincides with $PW_\pi$.

\medskip

From now on we will omit the subscript $f$ in the notation of $F_f$. 
The aim of this work is to obtain stable sampling results for the functions $F \in\mathcal{H}_{U,\phi,\Phi}$; for instance, the stable recovery of any $F\in \mathcal{H}_{U,\phi,\Phi}$ from the sequence of its samples $\{F(t)\}_{t\in H}$ taken at the countable subgroup $H$ of $G$ and/or other sequences of samples  $\{\mathcal{L}_mF(t)\}_{t\in H}$ introduced in  next section.
\subsection{Sampling data as a filtering process}
\label{section3-1}
Let $F$ be a function in $\mathcal{H}_{U,\phi,\Phi}$. For each $t\in H$, we have for the sample $F(t)$  the expression
\begin{equation}
\label{s1}
\begin{split}
F(t)&=\big\langle f,  U(t)\phi\big\rangle_{\mathcal{H}}=\big\langle \sum_{n=1}^N\sum_{h\in H} x_{n}(h) U(h)\varphi_{n}, U(t)\phi \big\rangle_\mathcal{H}\\
&=\sum_{n=1}^N\sum_{h\in H} x_{n}(h)\big\langle \varphi_n, U(t-h)\phi\big\rangle_\mathcal{H}=\sum_{n=1}^N\big(a_n\ast_{_H} x_n\big)(t),\quad t\in H\,,
\end{split}
\end{equation}
where $a_n(s):=\big\langle \varphi_n, U(s)\phi\big\rangle_\mathcal{H}$, $s\in H$, and $\ast_{_H}$ denotes the convolution in 
$\ell^2(H)$.

\medskip

Similarly, we can define generalized average samples as follows: Given $M$ fixed elements $\psi_1, \psi_2,\dots,\psi_M$ 
in $\mathcal{H}$, for any $F\in \mathcal{H}_{U,\phi,\Phi}$ we define the samples at $H$ as
\begin{equation}
\label{smean}
\mathcal{M}_mF(t):=\big\langle f, U(t)\psi_m\big\rangle_\mathcal{H},\quad t\in H\,, \text{ and $m=1, 2, \dots, M$}\,.
\end{equation}
Proceeding as in Eq.\eqref{s1}, for each $m=1, 2, \dots, M$ we get
\[
\mathcal{M}_mF(t)=\sum_{n=1}^N \big(a_{m,n}\ast_{_H} x_n\big)(t)\,, \quad t\in H\,,
\]
where $a_{m,n}(s):=\big\langle \varphi_n, U(s)\psi_m\big\rangle_\mathcal{H}$, $s\in H$. The above examples together with the results in Section \ref{section2-2} lead us to define in  $\mathcal{H}_{U,\phi,\Phi}$ a {\em generalized stable sampling procedure} at the subgroup $H$ as follows: 
\begin{dfn}
\label{sampdef}
Let $F(s)=\big\langle f, U(s)\phi\big\rangle_{\mathcal{H}}$, $s\in G$, be a function in $\mathcal{H}_{U,\phi,\Phi}$ and suppose that $f=\sum_{n=1}^N\sum_{h\in H} x_{n}(h) U(h)\varphi_{n}$ in $\mathcal{H}_{\Phi}$. A generalized stable sampling procedure $\boldsymbol{\mathcal{L}}_{_A}$ at $H$ in $\mathcal{H}_{U,\phi,\Phi}$ is defined for each  $F\in \mathcal{H}_{U,\phi,\Phi}$ by 
\begin{equation}
\label{s2}
\boldsymbol{\mathcal{L}}_{_A}F(t):=\big(A\ast_{_H} \mathbf{x}\big)(t), \quad t\in H\,,
\end{equation}
where $\boldsymbol{\mathcal{L}}_{_A}F(t):=\big(\mathcal{L}_1F(t), \mathcal{L}_2 F(t), \dots, \mathcal{L}_M F(t)\big)^\top$, $\mathbf{x}(t)=(x_1(t), x_2(t), \dots, x_N(t))^\top \in \ell^2_{_N}(H)$, and $A$ denotes a matrix $[a_{m,n}]\in  \mathcal{M}_{_{M\times N}}\big(\ell^2(H)\big)$ such that:
\begin{enumerate}
\item Its tranfer matrix $\widehat{A}\in \mathcal{M}_{_{M\times N}}\big(L^\infty(\widehat{H})\big)$, and 
\item the constant  $\delta_A:=\einf_{\xi\in \widehat{H}} \det[\widehat{A}(\xi)^*\widehat{A}(\xi)]>0$\,.
\end{enumerate}
\end{dfn}
Definition \ref{sampdef} is, of course, equivalent to the classical one  that states the existence of two positive constants $0< c \le C$ such that
\[
c\|F\|^2 \le \sum_{m=1}^M \sum_{t\in H} |\mathcal{L}_m F(t)|^2 \le C \|F\|^2 \quad \text{for any $F\in \mathcal{H}_{U,\phi,\Phi}$}\,.
\]
See Notes \ref{note2} and \ref{note8} in Section \ref{section4}.

\medskip

The definition of stable sampling as stated above shows, in a explicit way, the relationship between the stable samples and their associated sampling formulas. Indeed, as it will be proved in Theorem \ref{sampteo} (see Section \ref{section4}), once a generalized stable sampling procedure $\boldsymbol{\mathcal{L}}_{_A}$ is given in 
$\mathcal{H}_{U,\phi,\Phi}$, there exists a method  to obtain the associated stable sampling formulas of the form:
\[
F(s)=\sum_{m=1}^M\sum_{t\in H} \mathcal{L}_m F(t)\,S_m(s-t)\,, \quad s\in G\,,
\]
for every function $F\in \mathcal{H}_{U,\phi,\Phi}$. Namely:
\begin{itemize}
\item Compute a matrix $\widehat{B}(\xi)\in \mathcal{M}_{_{N\times M}}\big(L^\infty(\widehat{H})\big)$ such that $\widehat{B}(\xi)\, \widehat{A}(\xi)=I_{_N}$, a.e. $\xi \in \widehat{H}$.
\item Compute the matrix $B \in  \mathcal{M}_{_{N\times M}}\big(\ell^2(H)\big)$ such that $\widehat{B}(\xi)$ is its transfer matrix.
\item Let $\mathbf{b}_m=\big(b_{1,m}, b_{2,m},\dots ,b_{N,m}\big)^\top$ be the $m$-th column of the matrix $B$, $m=1, 2,\dots,M$.
\item Finally, we obtain the sampling functions as $S_m(s)=\big\langle \beta_m, U(s)\phi \big\rangle_\mathcal{H}$, $s\in G$,  where 
$\beta_m=\sum_{n=1}^N\sum_{h\in H} b_{n,m}(h) U(h)\varphi_{n}$, $m=1, 2, \dots, M$.
\end{itemize}
Condition $2.$ in Definition \ref{sampdef} implies necessarily that $M\geq N$, i.e., the number $N$ of stable generators used in the auxiliary space $\mathcal{H}_\Phi$ determines the minimal number $M$ of sequences of samples at $H$ that should be considered.
In next section we go into the details. 
\section{The main sampling result and consequences}
\label{section4}
In this section we state and prove a general sampling result for $\mathcal{H}_{U,\phi,\Phi}$ associated with a stable sampling procedure 
$\boldsymbol{\mathcal{L}}_{_A}$ at a subgroup $H$. We will see that other usual sampling results can be deduced from it.
\begin{teo}
\label{sampteo}
Assume that a generalized sampling procedure $\boldsymbol{\mathcal{L}}_{_A}$ at $H$, as in Definition \ref{sampdef},  has been defined in $\mathcal{H}_{U,\phi,\Phi}$. Then, there exist $M$ sampling functions $S_m$, $m=1, 2,\dots, M$, in $\mathcal{H}_{U,\phi,\Phi}$ such that the sequence $\{S_m(\cdot-t)\}_{t\in H;\,m=1, 2,\dots,M}$ is a frame for $\mathcal{H}_{U,\phi,\Phi}$, and the sampling expansion
\begin{equation}
\label{samf1}
F(s)=\sum_{m=1}^M\sum_{t\in H} \mathcal{L}_m F(t)\,S_m(s-t)\,, \quad s\in G\,,
\end{equation}
holds for every $F\in \mathcal{H}_{U,\phi,\Phi}$. The pointwise convergence of the above series is absolute in $G$ and uniform on $G$. It also converges in the $L^2(G)$-norm sense. 
\end{teo}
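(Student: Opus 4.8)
\medskip
\noindent\emph{Sketch of the intended proof.} The plan is to transport the statement to the auxiliary Hilbert space $\ell^2_{_N}(H)$ and apply the dictionary between convolution systems and frames of translates of Section~\ref{section2-2}. Let $\mathcal{T}_\Phi:\ell^2_{_N}(H)\to\mathcal{H}_\Phi$ be the synthesis operator $\mathbf{x}\mapsto\sum_{n=1}^N\sum_{h\in H}x_n(h)\,U(h)\varphi_n$ of the Riesz basis $\{U(h)\varphi_n\}_{h\in H;\,n=1,\dots,N}$, and let $\mathcal{T}:\mathcal{H}_\Phi\to\mathcal{H}_{U,\phi,\Phi}$ be $f\mapsto F_f$; both are isomorphisms of Hilbert spaces. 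Their composition $\Theta:=\mathcal{T}\circ\mathcal{T}_\Phi$ intertwines the index translation $T_h$ on $\ell^2_{_N}(H)$ with the $H$-shift $F\mapsto F(\cdot-h)$ on $\mathcal{H}_{U,\phi,\Phi}$, since $\mathcal{T}_\Phi T_h=U(h)\,\mathcal{T}_\Phi$ and $\mathcal{T}\,U(h)f=F_{U(h)f}=F_f(\cdot-h)$ (the last identity is recorded in Section~\ref{section3}); in particular $\Theta(T_h\mathbf{b})=\bigl(\mathcal{T}\mathcal{T}_\Phi\mathbf{b}\bigr)(\cdot-h)$ for every $\mathbf{b}\in\ell^2_{_N}(H)$.

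First I would rewrite the samples inside $\ell^2_{_N}(H)$: if $F=F_f$ with $f=\mathcal{T}_\Phi\mathbf{x}$, then \eqref{s1} and Definition~\ref{sampdef} give $\mathcal{L}_mF(t)=[A\ast_{_H}\mathbf{x}]_m(t)=\langle\mathbf{x},T_t\mathbf{a}^*_m\rangle_{\ell^2_{_N}(H)}$, so $\boldsymbol{\mathcal{L}}_{_A}F$ is exactly the analysis operator of the system $\{T_h\mathbf{a}^*_m\}_{h\in H;\,m=1,\dots,M}$ applied to $\mathbf{x}$; by Section~\ref{section2-2}, condition~$2$ of Definition~\ref{sampdef} ($\delta_A>0$) makes this system a frame for $\ell^2_{_N}(H)$. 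Next I would exhibit a structured dual. Since $|\det\bigl(\widehat{A}(\xi)^*\widehat{A}(\xi)\bigr)|\ge\delta_A>0$ for a.e. $\xi$, Cramer's rule shows that $\bigl(\widehat{A}(\xi)^*\widehat{A}(\xi)\bigr)^{-1}$ has entries in $L^\infty(\widehat{H})$, hence $\widehat{B}(\xi):=\bigl(\widehat{A}(\xi)^*\widehat{A}(\xi)\bigr)^{-1}\widehat{A}(\xi)^*$ belongs to $\mathcal{M}_{_{N\times M}}\bigl(L^\infty(\widehat{H})\bigr)$ and satisfies $\widehat{B}(\xi)\widehat{A}(\xi)=I_{_N}$ a.e.; as $\widehat{H}$ is compact, $L^\infty(\widehat{H})\subset L^2(\widehat{H})\cong\ell^2(H)$, so $\widehat{B}$ is the transfer matrix of a matrix $B\in\mathcal{M}_{_{N\times M}}\bigl(\ell^2(H)\bigr)$. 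By Section~\ref{section2-2}, with $\mathbf{b}_m$ the $m$-th column of $B$, the sequences $\{T_h\mathbf{a}^*_m\}$ and $\{T_h\mathbf{b}_m\}$ are a pair of dual frames for $\ell^2_{_N}(H)$, so in particular $\{T_h\mathbf{b}_m\}_{h\in H;\,m=1,\dots,M}$ is a frame and
\[
\mathbf{x}=\sum_{m=1}^M\sum_{h\in H}\langle\mathbf{x},T_h\mathbf{a}^*_m\rangle_{\ell^2_{_N}(H)}\,T_h\mathbf{b}_m=\sum_{m=1}^M\sum_{h\in H}\mathcal{L}_mF(h)\,T_h\mathbf{b}_m .
\]

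Then I would transport this expansion. Put $\beta_m:=\mathcal{T}_\Phi\mathbf{b}_m\in\mathcal{H}_\Phi$ and $S_m:=\mathcal{T}\beta_m=F_{\beta_m}\in\mathcal{H}_{U,\phi,\Phi}$, i.e. $S_m(s)=\langle\beta_m,U(s)\phi\rangle_\mathcal{H}$; this is the formula announced at the end of Section~\ref{section3}. Applying the bounded operator $\Theta$ term by term yields \eqref{samf1} with unconditional convergence in the norm of $\mathcal{H}_{U,\phi,\Phi}$, which, that space being a closed subspace of $L^2(G)$, is the $L^2(G)$-norm convergence. Moreover $\{S_m(\cdot-t)\}_{t\in H;\,m=1,\dots,M}=\Theta\bigl(\{T_h\mathbf{b}_m\}\bigr)$ is a frame for $\mathcal{H}_{U,\phi,\Phi}$, since a bounded invertible operator carries frames to frames. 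For the pointwise statements I would use that $\mathcal{H}_{U,\phi,\Phi}$ is a RKHS whose kernel $k$ is dominated by that of $\mathcal{H}_\psi$ with $\psi(t)=U(t)\phi$ (it is a closed subspace of the latter), so $\sup_{s\in G}k(s,s)=\sup_s\|k(\cdot,s)\|^2\le\|S_\psi^{-1}\|\,\|\phi\|^2=:\kappa<\infty$ because $U(s)$ is unitary; hence point evaluations on $\mathcal{H}_{U,\phi,\Phi}$ are uniformly bounded and the $\mathcal{H}_{U,\phi,\Phi}$-norm convergence of \eqref{samf1} improves to uniform convergence on $G$. Finally, writing $S_m(s-t)=\langle k(\cdot,s),S_m(\cdot-t)\rangle_{\mathcal{H}_{U,\phi,\Phi}}$ and using Cauchy--Schwarz,
\[
\sum_{m=1}^M\sum_{t\in H}\bigl|\mathcal{L}_mF(t)\bigr|\,\bigl|S_m(s-t)\bigr|\le\Bigl(\sum_{m,t}\bigl|\mathcal{L}_mF(t)\bigr|^2\Bigr)^{1/2}\Bigl(\sum_{m,t}\bigl|\langle k(\cdot,s),S_m(\cdot-t)\rangle\bigr|^2\Bigr)^{1/2}\le\sqrt{C}\,\|F\|\,\sqrt{B'\,\kappa},
\]
where $C$ is the upper sampling bound and $B'$ a Bessel bound of $\{S_m(\cdot-t)\}$; the right-hand side is finite and independent of $s$, which yields the absolute (hence unconditional) convergence, uniformly on $G$.

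The step I expect to be the main obstacle is the construction of the dual matrix $\widehat{B}$ with entries genuinely in $L^\infty(\widehat{H})$: this is exactly what makes $B$ an element of $\mathcal{M}_{_{N\times M}}\bigl(\ell^2(H)\bigr)$ and lets the convolution/frame-of-translates correspondence of Section~\ref{section2-2} apply, and it is the only place where the hypothesis $\delta_A>0$ is really used (through Cramer's rule applied to the $N\times N$ matrix $\widehat{A}(\xi)^*\widehat{A}(\xi)$). Everything else --- the intertwining of the two isomorphisms with the shifts, and the three convergence modes --- then follows mechanically from the RKHS structure and the uniform kernel bound.
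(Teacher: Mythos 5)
Your proposal is correct and follows essentially the same route as the paper: the identification $\mathcal{L}_mF(t)=\langle\mathbf{x},T_t\mathbf{a}^*_m\rangle_{\ell^2_{_N}(H)}$, the Moore--Penrose left-inverse $\widehat{B}=\bigl(\widehat{A}^*\widehat{A}\bigr)^{-1}\widehat{A}^*$ giving the dual frame $\{T_h\mathbf{b}_m\}$, and the transport of the $\ell^2_{_N}(H)$-expansion through $\mathcal{T}_\Phi$ and $f\mapsto F_f$ to obtain $S_m(s)=\langle\beta_m,U(s)\phi\rangle_{\mathcal{H}}$. Your treatment of the convergence claims (uniform kernel bound of the RKHS and Cauchy--Schwarz with the Bessel bound) is merely a more explicit version of the paper's appeal to the unconditionality of frame expansions and the bound $|F(s)|\le\|f\|\,\|\phi\|$, so no substantive difference remains.
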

\begin{proof}
Consider  $F(s)=\big\langle f, U(s)\phi\big\rangle_{\mathcal{H}}$, $s\in G$, a function in $\mathcal{H}_{U,\phi,\Phi}$ and suppose that for the corresponding $f$ in $\mathcal{H}_{\Phi}$ we have the expansion $f=\sum_{n=1}^N\sum_{h\in H} x_{n}(h) U(h)\varphi_{n}$. According to the results in Section \ref{section2-2}, since the matrix $\widehat{A}$ has entries in $L^\infty(\widehat{H})$ and
$\delta_A>0$, the sequence $\big\{T_t\,\mathbf{a}_m^*\big\}_{t\in H;\,m=1, 2,\dots,M}$ is a frame for $\ell_{_N}^2(H)$ where 
$\mathbf{a}_m^*=(a_{m,1}^*, a_{m,2}^*, \dots, a_{m,N}^*)^\top$ in $\ell_{_N}^2(H)$ denotes the $m$-th column of the matrix $A^*=[a_{m,n}^*]^\top \in \mathcal{M}_{_{N\times M}}\big(\ell^2(H)\big)$ whose entries are the involutions $a_{m,n}^*(h)=\overline{a_{m,n}(-h)}$, $h\in H$. Moreover, $\mathcal{L}_{m}F(t)=\big\langle \mathbf{x}, T_t\, \mathbf{a}^*_{m} \big \rangle_{\ell^2_{_N}(H)}$, $t\in H$ and $m=1, 2, \dots, M$.

Furthermore, there exists a matrix $\widehat{B}(\xi)\in \mathcal{M}_{_{N\times M}}\big(L^\infty(\widehat{H})\big)$ such that $\widehat{B}(\xi)\, \widehat{A}(\xi)=I_{_N}$, a.e. $\xi \in \widehat{H}$; it suffices to take $\widehat{B}(\xi)=\widehat{A}(\xi)^\dag:=\big[\widehat{A}(\xi)^*\widehat{A}(\xi)\big]^{-1}\widehat{A}(\xi)^*$, the Moore-Penrose pseudo-inverse of  $\widehat{A}(\xi)$. Besides, the sequence $\big\{T_t\,\mathbf{b}_m\big\}_{t\in H;\,m=1, 2,\dots,M}$ in $\ell_{_N}^2(H)$ is a dual frame of $\big\{T_t\,\mathbf{a}_m^*\big\}_{t\in H;\,m=1, 2,\dots,M}$, where 
$\mathbf{b}_m$ is the $m$-th column of the matrix $B \in  \mathcal{M}_{_{N\times M}}\big(\ell^2(H)\big)$ whose transfer matrix is $\widehat{B}(\xi)$. As a consequence, for $\mathbf{x}=(x_1, x_2, \dots, x_N)^\top \in \ell^2_{_N}(H)$ associated to $f \in\mathcal{H}_{\Phi}$ we have
\begin{equation}
\label{l2conv}
\mathbf{x}=\sum_{m=1}^M \sum_{t\in H} \big\langle \mathbf{x}, T_t\,\mathbf{a}^*_{m} \big \rangle_{\ell^2_{_N}(H)}\, T_t\,\mathbf{b}_{m}=\sum_{m=1}^M\sum_{t\in H} \mathcal{L}_{m}F(t)\,T_t\,\mathbf{b}_{m} \quad \text{in $\ell^2_{_N}(H)$}\,.
\end{equation}
Now consider the isomorphism $\mathcal{T}_\Phi$ defined as
\[
\begin{array}[c]{ccll}
 & \mathcal{T}_\Phi:\ell_{_N}^2(H) & \longrightarrow & \mathcal{H}_\Phi\\
       & \mathbf{x} & \longmapsto & f=\sum_{n=1}^N\sum_{h\in H} x_{n}(h) U(h)\varphi_{n}\,,
\end{array}
\]
which satisfies the shifting property $\mathcal{T}_\Phi (T_t\,\mathbf{b})=U(t)(\mathcal{T}_\Phi \mathbf{b})$, $t\in H$ and 
$\mathbf{b} \in \ell_{_N}^2(H)$. 
Applying the isomorphism $\mathcal{T}_\Phi$ in Eq.~\eqref{l2conv}, we get 
\[
f=\sum_{m=1}^M\sum_{t\in H} \mathcal{L}_{m}F(t) U(t)(\mathcal{T}_\Phi \mathbf{b}_m)=\sum_{m=1}^M\sum_{t\in H} \mathcal{L}_{m}F(t) U(t)\beta_m\,, \quad \text{in $\mathcal{H}_\Phi$}\,, 
\]
where $\beta_m=\mathcal{T}_\Phi \mathbf{b}_m=\sum_{n=1}^N\sum_{h\in H} b_{n,m}(h) U(h)\varphi_{n} \in \mathcal{H}_\Phi$. Finally, for $F \in \mathcal{H}_{U,\phi,\Phi}$ we obtain
\begin{equation}
\label{sampf2}
\begin{split}
F(s)&=\big\langle \sum_{m=1}^M\sum_{t\in H} \mathcal{L}_{m}F(t) U(t)\beta_m, U(s)\phi \big\rangle 
=\sum_{m=1}^M\sum_{t\in H} \mathcal{L}_m F(t) \big\langle \beta_m, U(s-t)\phi \big\rangle_\mathcal{H}\\
&=\sum_{m=1}^M\sum_{t\in H} \mathcal{L}_m F(t)\, S_m(s-t)\,,\quad s\in G\,,
\end{split}
\end{equation}
where $S_m(s)=\big\langle \beta_m, U(s)\phi \big\rangle_\mathcal{H}$, \, $s\in G$, and $m=1, 2, \dots, M$. The pointwise convergence in \eqref{sampf2} is absolute due to the unconditional character  of  a frame expansion. It is uniform on $G$ due to the inequality $|F(s)| \le \|f\| \, \|U(s)\phi\| \le \|f\|\,\|\phi\|$ for all $s\in G$. The composition of isomorphisms
\begin{equation}
\label{isos}
\begin{array}[c]{cccccc}
 & \ell_{_N}^2(H) & \longrightarrow & \mathcal{H}_\Phi & \longrightarrow & \mathcal{H}_{U,\phi,\Phi}\\
       & \mathbf{x} & \longmapsto & f & \longmapsto  & F\,,
\end{array}
\end{equation}
shows that the $\ell^2_{_N}(H)$-convergence in expansion \eqref{l2conv} implies
the convergence of expansion in  \eqref{sampf2} in the $L^2(G)$-norm sense; besides, the sequence $\{S_m(\cdot-t)\}_{t\in H;\,m=1, 2,\dots,M}$ is a frame for $\mathcal{H}_{U,\phi,\Phi}$.   
\end{proof}
As a consequence of the above theorem we obtain a {\em Shannon-type} sampling formula for the space $\mathcal{H}_{U,\phi,\Phi}$:

\begin{cor}
\label{cor2}
In order to recover any $F\in \mathcal{H}_{U,\phi,\Phi}$ from its samples $\{F(t)\}_{t\in H}$, at the subgroup $H$, necessarily  $N=1$. Under conditions in Definition \ref{sampdef}, i.e., 
$\widehat{a}\in L^\infty(\widehat{H})$ and $\einf_{\xi \in \widehat{H}}|\, \widehat{a}(\xi)|>0$, where $a(s)=\big\langle \varphi, U(s)\phi \big\rangle_{\mathcal{H}}$, $s\in H$, there exists a unique sampling function $S_a\in \mathcal{H}_{U,\phi,\Phi}$ such that the sampling expansion
\begin{equation}
\label{samf2}
F(s)=\sum_{t\in H} F(t)\,S_a(s-t)\,, \quad s\in G\,,
\end{equation}
holds in $\mathcal{H}_{U,\phi,\Phi}$. The sequence $\{S_a(\cdot-t)\}_{t\in H}$ is a Riesz basis for $\mathcal{H}_{U,\phi,\Phi}$.
\end{cor}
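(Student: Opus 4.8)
The plan is to obtain Corollary~\ref{cor2} as the specialization $M=N=1$ of Theorem~\ref{sampteo}, after first checking that this specialization is forced.

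\emph{Step 1 (necessity of $N=1$).} I would observe that recovering $F$ from the single sequence $\{F(t)\}_{t\in H}$ corresponds, via Eq.~\eqref{s1}, to the sampling procedure $\boldsymbol{\mathcal{L}}_{_A}$ with $M=1$ and associated row matrix $A=[a_1,\dots,a_N]\in\mathcal{M}_{_{1\times N}}(\ell^2(H))$, where $a_n(s)=\langle\varphi_n,U(s)\phi\rangle_{\mathcal{H}}$, $s\in H$. Then $\widehat{A}(\xi)$ is a $1\times N$ row, so the $N\times N$ matrix $\widehat{A}(\xi)^*\widehat{A}(\xi)$ has rank at most one (each of its rows is a scalar multiple of $[\widehat{a}_1(\xi),\dots,\widehat{a}_N(\xi)]$); hence $\det[\widehat{A}(\xi)^*\widehat{A}(\xi)]$ vanishes a.e.\ as soon as $N\ge2$, which forces $\delta_A=0$ and contradicts condition~2 of Definition~\ref{sampdef}. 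Thus $N=1$ is necessary. With $N=1$ the transfer matrix $\widehat{A}(\xi)=\widehat{a}(\xi)$ is scalar, $a(s)=\langle\varphi,U(s)\phi\rangle_{\mathcal{H}}$, and since $\widehat{A}(\xi)^*\widehat{A}(\xi)=|\widehat{a}(\xi)|^2$, conditions 1--2 of Definition~\ref{sampdef} become exactly $\widehat{a}\in L^\infty(\widehat{H})$ and $\einf_{\xi\in\widehat{H}}|\widehat{a}(\xi)|>0$.

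\emph{Step 2 (the expansion).} Under these hypotheses Theorem~\ref{sampteo} applies with $M=N=1$ and yields a sampling function $S_a(s)=\langle\beta,U(s)\phi\rangle_{\mathcal{H}}$, $s\in G$, with $\beta=\sum_{h\in H}b(h)U(h)\varphi\in\mathcal{H}_\Phi$, for which \eqref{samf2} holds for every $F\in\mathcal{H}_{U,\phi,\Phi}$, with the asserted absolute and uniform pointwise convergence on $G$ and convergence in $L^2(G)$. Here $b\in\ell^2(H)$ is the element whose Fourier transform is $\widehat{b}=\widehat{a}^{-1}$, which lies in $L^\infty(\widehat{H})$ precisely because $\widehat{a}$ is bounded and bounded below; this is the scalar instance of the matrix $\widehat{B}(\xi)$ of the method described at the end of Section~\ref{section3}.

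\emph{Step 3 (Riesz basis and uniqueness).} Since $M=N=1$ and $\einf_{\xi}|\det\widehat{A}(\xi)|=\einf_{\xi}|\widehat{a}(\xi)|>0$, the last paragraph of Section~\ref{section2-2} gives that $\mathcal{A}$ is an isomorphism of $\ell^2(H)$, so $\{T_t\,a^*\}_{t\in H}$ is a Riesz basis for $\ell^2(H)$ with \emph{unique} dual $\{T_t\,b\}_{t\in H}$, itself a Riesz basis. Pushing these bases through the composition of isomorphisms \eqref{isos} (which for $N=1$ reads $\ell^2(H)\to\mathcal{H}_\Phi\to\mathcal{H}_{U,\phi,\Phi}$) shows that $\{S_a(\cdot-t)\}_{t\in H}$ is a Riesz basis for $\mathcal{H}_{U,\phi,\Phi}$. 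For uniqueness I would note that, under the same isomorphisms, the sampling operator $F\mapsto\{F(t)\}_{t\in H}$ is the analysis operator of the Riesz basis $\{T_t\,a^*\}_{t\in H}$, hence a bounded bijection of $\mathcal{H}_{U,\phi,\Phi}$ onto $\ell^2(H)$; if some $\widetilde{S}\in\mathcal{H}_{U,\phi,\Phi}$ also satisfied \eqref{samf2} for all $F$, subtracting the two expansions would give $\sum_{t\in H}c_t\big(S_a(\cdot-t)-\widetilde{S}(\cdot-t)\big)=0$ for every $\{c_t\}\in\ell^2(H)$, and taking $c=\delta_{t_0}$ forces $\widetilde{S}=S_a$. (Equivalently, $\widehat{b}=\widehat{a}^{-1}$ is the only left inverse of the $1\times1$ transfer matrix $\widehat{A}$.)

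No step involves a real computation; the only point requiring care is Step~3, namely invoking the equivalence ``$\mathcal{A}$ is an isomorphism $\iff$ $M=N$ and $\einf_{\xi}|\det\widehat{A}(\xi)|>0$'' in order to upgrade the frame produced by Theorem~\ref{sampteo} to a Riesz basis and to obtain the surjectivity of the sampling operator underlying the uniqueness of $S_a$.
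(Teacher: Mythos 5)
Your proposal is correct and follows essentially the same route as the paper: specialize Theorem~\ref{sampteo} to the scalar case $M=N=1$, where $\widehat{b}=\widehat{a}^{-1}$ is the unique left inverse in $L^\infty(\widehat{H})$, and invoke the $M=N$ isomorphism criterion of Section~\ref{section2-2} to get the Riesz basis property. You merely make explicit two points the paper leaves implicit (the rank-one argument forcing $N=1$, and the surjectivity-of-sampling argument for uniqueness of $S_a$), both of which are fine.
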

\begin{proof}
In this scalar case, there exists a unique $\widehat{b}(\xi)\in L^\infty(\widehat{H})$ such that $\widehat{b}(\xi)\, \widehat{a}(\xi)=1$, a.e. $\xi \in \widehat{H}$. As a consequence, the associated sampling function $S_a(s)=\langle \beta, U(s)\phi \rangle_\mathcal{H}$, $s\in G$, where $\beta=\sum_{h\in H} b(h)\,U(h)\varphi$ belongs to $\mathcal{H}_\varphi$ and it is unique. 
\end{proof}
In case $N>1$ we must add $M-1$ sequences  $\{\mathcal{L}_m F(t)\}_{t\in H}$ of samples, $m=2,\dots,M$, with $M\geq N$, to the sequence $\{F(t)\}_{t\in H}$ such that the corresponding matrix $A=[a_{m,n}]$ in  $\mathcal{M}_{_{M\times N}}\big(\ell^2(H)\big)$ satisfies the conditions in Definition \ref{sampdef}. Thus, by using Theorem \ref{sampteo}, there exist $M$ sampling functions $S_m \in\mathcal{H}_{U,\phi,\Phi}$, $m=1, 2,\dots, M$,  such that $\{S_m(\cdot-t)\}_{t\in H;\,m=1, 2,\dots,M}$ is a frame for $\mathcal{H}_{U,\phi,\Phi}$, and the sampling expansion
\begin{equation}
\label{samf3}
F(s)=\sum_{t\in H} F(t)\,S_1(s-t)+\sum_{m=2}^M\sum_{t\in H} \mathcal{L}_m F(t)\,S_m(s-t)\,, \quad s\in G\,,
\end{equation}
holds for every $F\in \mathcal{H}_{U,\phi,\Phi}$.

\subsection{Sampling at a subgroup $R$ with finite index in $H$}
\label{section4-1}

The result in Theorem \ref{sampteo} can be easily modified in order to take just samples at a subgroup $R$ with finite index in $H$. Indeed, let  $R$ be a  subgroup of $H$ with finite index $L$. We fix a set $\{h_1, h_2, \dots,h_L\}$ of representatives of the cosets of $R$, i.e., the group $H$ can be decomposed as
\[
H= (h_1+R) \cup (h_2+R) \cup \dots \cup (h_{L}+R)\,\, \text{with}\,\, (h_l+R) \cap (h_{l'}+R)=\varnothing \,\,\text{for $l\neq l'$}\,.
\]
The space $\mathcal{H}_{\Phi}$ can be written as
\[
\begin{split}
\mathcal{H}_\Phi&=\Big\{\sum_{n=1}^N\sum_{h\in H} x_n(h)\,U(h)\varphi_n\, :\,  x_n \in \ell^2(H)\Big\}=
\Big\{\sum_{n=1}^N\sum_{l=1}^L \sum_{r\in R} x_n(h_l+r)\,U(h_l+r)\varphi_n\Big\}\\
&=\Big\{\sum_{n=1}^N\sum_{l=1}^L \sum_{r\in R} x_{nl}(r)\, U(r)\varphi_{nl}\, :\,  x_{nl} \in \ell^2(R)\Big\}\,,
\end{split}
\]
with $x_{nl}(r):=x_n(h_l+r)$ and $\varphi_{nl}:=U(h_l)\varphi_n$, where the new index $nl$ goes  from $11$ to $NL$.
Thus our subspace $\mathcal{H}_\Phi$ can be rewritten as $\mathcal{H}_{\widetilde{\Phi}}$ with $NL$ generators $\widetilde{\Phi}=\{\varphi_{nl}\}$  and coefficients $x_{nl}$ in $\ell^2(R)$. 

For instance, concerning the samples $\{F(r)\}_{r\in R}$ we have
\begin{equation}
\label{s3}
\begin{split}
F(r)&=\big\langle f,  U(r)\phi\big\rangle_{\mathcal{H}}=\big\langle \sum_{n=1}^N\sum_{l=1}^L\sum_{s\in R} x_{nl}(s) U(s)\varphi_{nl}, U(r)\phi \big\rangle_\mathcal{H}\\
&=\sum_{n=1}^N\sum_{l=1}^L\sum_{s\in R} x_{nl}(s)\big\langle \varphi_{nl}, U(r-s)\phi\big\rangle_\mathcal{H}=\sum_{n=1}^N\sum_{l=1}^L \big(a_{1,nl}\ast_{_R} x_{nl}\big)(r),\quad r\in R\,,
\end{split}
\end{equation}
where $a_{1,nl}(s)=\big\langle \varphi_{nl}, U(s)\phi\big\rangle_\mathcal{H}$, $s\in R$, and $\ast_{_R}$ denotes the convolution in $\ell^2(R)$. The new index runs as $nl=11, 12, \dots, 1L,\dots, N1, N2,\dots, NL$.  In general, we could consider a stable sampling procedure $\boldsymbol{\mathcal{L}}_{_A}$ at $R$ with associated matrix $A=[a_{m,nl}]\in  \mathcal{M}_{_{M\times NL}}\big(\ell^2(R)\big)$ as in Definition \ref{sampdef}. Thus we have:
\begin{cor}
\label{cor3}
Let  $A=[a_{m,nl}]\in  \mathcal{M}_{_{M\times NL}}\big(\ell^2(R)\big)$ be a matrix  associated to a stable sampling procedure 
$\boldsymbol{\mathcal{L}}_{_A}$ at $R$ as in Definition \ref{sampdef}. Then, there exist $M\geq NL$ sampling functions
 $S_m\in \mathcal{H}_{U,\phi,\Phi}$, $m=1, 2,\dots, M$, such that the sampling expansion
\begin{equation}
\label{samf4}
F(s)=\sum_{m=1}^M \sum_{r\in R} \mathcal{L}_m F(r)\,S_m(s-r)\,, \quad s\in G\,.
\end{equation}
holds in $\mathcal{H}_{U,\phi,\Phi}$. The sequence $\{S_m(\cdot-r)\}_{r\in R;\,m=1, 2,\dots,M}$ is a frame for 
$\mathcal{H}_{U,\phi,\Phi}$.
\end{cor}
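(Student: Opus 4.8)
The plan is to deduce Corollary~\ref{cor3} from Theorem~\ref{sampteo} by using the rewriting of $\mathcal{H}_\Phi$ as $\mathcal{H}_{\widetilde{\Phi}}$ described just above, with the group $H$ replaced by its finite-index subgroup $R$ and with the $NL$ generators $\widetilde{\Phi}=\{\varphi_{nl}\}$, $\varphi_{nl}=U(h_l)\varphi_n$, in place of the $N$ generators $\Phi$. First I would check that $\widetilde{\Phi}$ is a stable set of generators of $\mathcal{H}_\Phi$ relative to $R$, that is, that $\{U(r)\varphi_{nl}\}_{r\in R;\,n,l}$ is a Riesz basis of $\mathcal{H}_\Phi$. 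This is immediate: from the partition $H=(h_1+R)\cup\dots\cup(h_L+R)$ and the identity $U(h_l+r)\varphi_n=U(r)\big(U(h_l)\varphi_n\big)=U(r)\varphi_{nl}$, the family $\{U(r)\varphi_{nl}\}_{r\in R;\,n,l}$ is merely a reindexing of the given Riesz basis $\{U(h)\varphi_n\}_{h\in H;\,n}$, so it is again a Riesz basis of $\mathcal{H}_\Phi$ with the same bounds. Consequently $\mathcal{H}_{\widetilde{\Phi}}=\mathcal{H}_\Phi$, hence $\mathcal{H}_{U,\phi,\widetilde{\Phi}}=\mathcal{H}_{U,\phi,\Phi}$; the continuous frame hypothesis on $\{U(t)\phi\}_{t\in G}$ with respect to $(G,\mu_G)$ is untouched, and $R$, being a subgroup of the countable discrete group $H$, is again a countable discrete subgroup of $G$, with compact dual $\widehat{R}$.

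Next I would note that the data $\big(R,\widetilde{\Phi},A=[a_{m,nl}]\in\mathcal{M}_{_{M\times NL}}(\ell^2(R))\big)$ fits Definition~\ref{sampdef} verbatim, with $\widehat{H}$ replaced throughout by $\widehat{R}$: by hypothesis $\widehat{A}\in\mathcal{M}_{_{M\times NL}}(L^\infty(\widehat{R}))$ and $\delta_A=\einf_{\xi\in\widehat{R}}\det[\widehat{A}(\xi)^*\widehat{A}(\xi)]>0$, and, as noted after Definition~\ref{sampdef}, the second condition forces $M\ge NL$. The samples unwind exactly as in Eq.~\eqref{s3} to $\mathcal{L}_mF(r)=\sum_{n,l}(a_{m,nl}\ast_{_R}x_{nl})(r)=\big\langle\mathbf{x},T_r\mathbf{a}^*_m\big\rangle_{\ell^2_{_{NL}}(R)}$, where $\mathbf{x}=(x_{nl})\in\ell^2_{_{NL}}(R)$ is the coefficient vector of $f$ and $\mathbf{a}^*_m$ is the $m$-th column of $A^*$.

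With these identifications in place, the proof of Theorem~\ref{sampteo} carries over word for word after the substitution $(\ell^2_{_N}(H),\widehat{H},\mathcal{T}_\Phi)\leadsto(\ell^2_{_{NL}}(R),\widehat{R},\mathcal{T}_{\widetilde{\Phi}})$: Section~\ref{section2-2} gives that $\{T_r\mathbf{a}^*_m\}_{r\in R;\,m}$ is a frame of $\ell^2_{_{NL}}(R)$ admitting a structured dual $\{T_r\mathbf{b}_m\}_{r\in R;\,m}$ coming from a left inverse $\widehat{B}(\xi)$ of $\widehat{A}(\xi)$ (for instance its Moore-Penrose pseudo-inverse); expanding $\mathbf{x}$ in that dual and applying the isomorphism $\mathcal{T}_{\widetilde{\Phi}}:\ell^2_{_{NL}}(R)\to\mathcal{H}_\Phi$, which has the shifting property $\mathcal{T}_{\widetilde{\Phi}}(T_r\mathbf{b})=U(r)\,\mathcal{T}_{\widetilde{\Phi}}\mathbf{b}$ for $r\in R$ by construction, yields $f=\sum_{m,r}\mathcal{L}_mF(r)\,U(r)\beta_m$ with $\beta_m=\mathcal{T}_{\widetilde{\Phi}}\mathbf{b}_m\in\mathcal{H}_\Phi$; pairing with $U(s)\phi$ then gives \eqref{samf4} with $S_m(s)=\big\langle\beta_m,U(s)\phi\big\rangle_\mathcal{H}$, and the chain of isomorphisms $\ell^2_{_{NL}}(R)\to\mathcal{H}_\Phi\to\mathcal{H}_{U,\phi,\Phi}$ transfers the frame $\{T_r\mathbf{b}_m\}$ to the frame $\{S_m(\cdot-r)\}_{r\in R;\,m}$ and transfers the $\ell^2$-convergence to $L^2(G)$-convergence, while absolute and uniform convergence on $G$ follow from $|F(s)|\le\|f\|\,\|\phi\|$ as before. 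I do not expect any genuine obstacle here: the whole content is the observation that passing to a finite-index subgroup is just a change of generators, so the only point requiring care is verifying that the Riesz bounds and the shifting property survive the reindexing $h=h_l+r$, which they do automatically.
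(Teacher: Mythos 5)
Your proposal is correct and follows essentially the same route as the paper: rewrite $\mathcal{H}_\Phi$ as $\mathcal{H}_{\widetilde{\Phi}}$ with the $NL$ generators $\varphi_{nl}=U(h_l)\varphi_n$ and coefficients in $\ell^2(R)$, then rerun the proof of Theorem~\ref{sampteo} with $(\ell^2_{_{NL}}(R),\widehat{R},\mathcal{T}_{\widetilde{\Phi}})$ in place of $(\ell^2_{_N}(H),\widehat{H},\mathcal{T}_\Phi)$, taking a left inverse $\widehat{B}(\xi)$ of $\widehat{A}(\xi)$ and setting $S_m(s)=\langle\beta_m,U(s)\phi\rangle_\mathcal{H}$ with $\beta_m=\mathcal{T}_{\widetilde{\Phi}}\mathbf{b}_m$. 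Your explicit check that the reindexed family $\{U(r)\varphi_{nl}\}_{r\in R;\,n,l}$ is again a Riesz basis is a detail the paper leaves implicit, but it is the same argument.
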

\begin{proof}
Consider a matrix $B\in \mathcal{M}_{_{NL\times M}}\big(\ell^2(R)\big)$ such that such that $\widehat{B}(\xi)\in \mathcal{M}_{_{NL\times M}}\big(L^\infty(\widehat{R})\big)$ and $\widehat{B}(\xi)\, \widehat{A}(\xi)=I_{_{NL}}$, a.e. $\xi \in \widehat{R}$. The sampling functions are $S_m(s)=\langle \beta_m, U(s)\phi \rangle_\mathcal{H}$, $s\in G$, where $\beta_m=\mathcal{T}_{\widetilde{\Phi}} \mathbf{b}_m$ belongs to $\mathcal{H}_{\widetilde{\Phi}}$, and $\mathbf{b}_m\in \ell^2_{NL}(R)$ is the $m$-th column of the matrix $B$ with transfer matrix $\widehat{B}$, $m=1, 2,\dots, M$. 
\end{proof}
\subsection{Additional notes and remarks}
\label{section4-2}
Next we include some comments and remarks enlightening the above results: 
\begin{enumerate}
\item In Section \ref{section3} we are assuming that the sequence $\{U(h)\varphi_{n}\}_{h\in H;\,n=1, 2,\dots,N}$ is a Riesz sequence in $\mathcal{H}$, i.e., a Riesz basis for $\mathcal{H}_\Phi$.  Necessary and sufficient conditions for sequences having this unitary structure can be found in Refs.~\cite{aldroubi:96,barbieri:15, cabrelli:10,lee:93,jia:91,gerardo:19}. 

\item \label{note2} For any stable sampling procedure $\boldsymbol{\mathcal{L}}_{_A}$ at $H$ as in Definition \ref{sampdef}  there exist positive constants $0< c \le C$ such that
\begin{equation}
\label{stable}
c\|F\|^2 \le \sum_{m=1}^M \sum_{t\in H} |\mathcal{L}_m F(t)|^2 \le C \|F\|^2 \quad \text{for every $F\in \mathcal{H}_{U,\phi,\Phi}$}\,.
\end{equation}
Indeed, it follows since the sequence  $\big\{T_t\,\mathbf{a}_m^*\big\}_{t\in H;\,m=1, 2,\dots,M}$ is a frame for $\ell_{_N}^2(H)$ and from the isomorphim in Eq.~\eqref{isos}.

\item In case $M=N$ in Theorem \ref{sampteo},  the sequence $\big\{T_t\,\mathbf{a}_m^*\big\}_{t\in H;\,m=1, 2,\dots,N}$ is a Riesz basis for $\ell_{_N}^2(H)$ and the square matrix $\widehat{A}(\xi)$ is invertible; proceeding as in the proof of Theorem \ref{sampteo} its inverse gives the dual Riesz basis $\big\{T_t\,\mathbf{b}_m\big\}_{t\in H;\,m=1, 2,\dots,N}$. As a consequence, the sequence $\{S_n(\cdot-t)\}_{t\in H;\,n=1, 2,\dots,N}$ is a Riesz basis for $\mathcal{H}_{U,\phi,\Phi}$. The uniqueness of the coefficients in a Riesz basis expansion gives the {\em interpolation property}:
\[ 
\mathcal{L}_n S_{n'}(t-t')=\delta_{n,n'}\delta_{t,t'}\,,\,\, \text{ where $t,t'\in H$ and $n, n'=1,2,\dots,N$}\,. 
\]

\item The use of multiple generators for the auxiliary space $\mathcal{H}_\Phi$ might have  advantages against the single generator case. For instance, in Corollary \ref{cor2} the conditions of stable sampling in Definition \ref{sampdef} for a single generator $\varphi$ are: $\widehat{a}\in L^\infty(\widehat{H})$ and $\einf_{\xi \in \widehat{H}}|\widehat{a}(\xi)|>0$, where $a(s)=\big\langle \varphi, U(s)\phi \big\rangle_{\mathcal{H}}$, $s\in H$. A way to overcome the restrictive second condition is to consider a subgroup $R$ of $H$ with finite index $L>1$, and then consider $M\geq L$ sequences of samples taken at the subgroup $R$.

\item In Theorem \ref{sampteo},  whenever $M>N$,  there exist infinite sampling functions $S_m$ coming from the different dual frames $\big\{T_t\,\mathbf{b}_m\big\}_{t\in H;\,m=1, 2,\dots,M}$ of $\big\{T_t\,\mathbf{a}_m^*\big\}_{t\in H;\,m=1, 2,\dots,M}$. All these duals come from the left-inverses $\widehat{B}(\xi)$ of $\widehat{A}(\xi)$ which are obtained, from the Moore-Penrose pseudo-inverse $\widehat{A}(\xi)^\dag=\big[\widehat{A}(\xi)^*\widehat{A}(\xi)\big]^{-1}\widehat{A}(\xi)^*$, by means of the $N\times M$ matrices 
\[
\widehat{B}(\xi):=\widehat{A}(\xi)^\dag+C(\xi)\big[I_M-\widehat{A}(\xi)\widehat{A}(\xi)^\dag\big]\,, 
\]
where $C(\xi)$ denotes any $N\times M$ matrix with entries in $L^\infty(\widehat{G})$. Indeed, it is straightforward to check that any matrix having this form is a left-inverse of $\widehat{A}(\xi)$. Moreover, any left-inverse 
$\widehat{B}(\xi)$ of $\widehat{A}(\xi)$ belongs to the above family; it suffices to take $C(\xi)=\widehat{B}(\xi)$.

\item The sequence $\big\{T_t\,\mathbf{a}_m^*\big\}_{t\in H;\,m=1, 2,\dots,M}$ is a Bessel sequence in $\ell_{_N}^2(H)$ if and only if the convolution system in \eqref{syscon} is bounded, i.e., if and only if the transfer matrix $\widehat{A}(\xi)$ belongs to $\mathcal{M}_{_{M\times N}}\big(L^\infty(\widehat{H})\big)$.
Moreover, having in mind the equivalence between the spectral and Frobenius norms for matrices (see Ref.~\cite{horn:99}), it is equivalent to the condition 
\[
\beta_A:= \esup_{\xi\in \widehat{H}} \lambda_{\text{max}}[\widehat{A}(\xi)^*\widehat{A}(\xi)]<+\infty\,,
\] 
where $\lambda_{\text{max}}$ denotes the largest eigenvalue of the positive semidefinite matrix 
$\widehat{A}(\xi)^*\widehat{A}(\xi)$.

\item \label{note8} Under the hypothesis $\widehat{A}\in \mathcal{M}_{_{M\times N}}\big(L^\infty(\widehat{H})\big)$, the condition 
$\delta_A>0$ in Definition \ref{sampdef} is equivalent to the condition $\alpha_A:=\einf_{\xi\in \widehat{H}} \lambda_{\text{min}}[\widehat{A}(\xi)^*\widehat{A}(\xi)]>0$, where $\lambda_{\text{min}}$ denotes the smallest eigenvalue of the positive semidefinite matrix $\widehat{A}(\xi)^*\widehat{A}(\xi)$. Indeed, it comes from the inequalities:
\[
\alpha_A^N \le \delta_A \le \alpha_A\, \beta_A^{N-1}\,,
\]
where $\beta_A$ was introduced in the above note.

\item Assuming that $\widehat{A}\in \mathcal{M}_{_{M\times N}}\big(L^\infty(\widehat{H})\big)$, it is worth to mention that the condition $\delta_A>0$ is also necessary  for the existence of a frame expansion \eqref{samf1} as those in Theorem \ref{sampteo}. Indeed, suppose that, for each $F\in \mathcal{H}_{U,\phi,\Phi}$, a frame expansion $F=\sum_{m=1}^M\sum_{t\in H} \mathcal{L}_m F(t)\,S_m(\cdot-t)$ holds, and let $\mathcal{T}$ denote the isomorphism in Eq.\eqref{isos}. Then we have 
\[
\mathbf{x}=\sum_{m=1}^M \sum_{t\in H} \big\langle \mathbf{x}, T_t\,\mathbf{a}^*_{m} \big \rangle_{\ell^2_{_N}(H)}\, \mathcal{T}^{-1} S_m(\cdot-t)\quad \text{ for all $\mathbf{x} \in \ell^2_{_N}(H)$}\,.
\]
Since $\{T_t\,\mathbf{a}^*_{m}\}_{t\in H;\,m=1, 2,\dots,M}$ is a Bessel sequence, the sequences $\{T_t\,\mathbf{a}^*_{m}\}_{t\in H;\,m=1, 2,\dots,M}$ and $\{\mathcal{T}^{-1} S_m(\cdot-t)\}_{t\in H;\,m=1, 2,\dots,M}$ form a pair of dual frames in $\ell^2_{_N}(H)$ (see Ref.~\cite{ole:16}). In particular, according to a result in Section \ref{section2-2}, since $\{T_t\,\mathbf{a}^*_{m}\}_{t\in H;\,m=1, 2,\dots,M}$ is a frame for $\ell^2_{_N}(H)$ we get $\delta_A>0$.

\item Whenever the entries $a_{m,n}$ of the matrix $A$ belong to $\ell^1(H)$, their Fourier transforms $\widehat{a}_{m,n}$ are continuous and consequently  they belong to $L^\infty(\widehat{H})$. In this case the condition $\delta_A>0$ is equivalent to $\det [\widehat{A}(\xi)^*\widehat{A}(\xi)]\neq 0$ for all $\xi \in \widehat{H}$.

\item For the samples given in Eqs.~\eqref{s1}-\eqref{smean}, the entries $a_{m,n}$ of the matrix $A$ depend on the unitary representation $U(t)$ of the group $G$, $\psi$ and $\Phi$. For a general stable sampling procedure $\boldsymbol{\mathcal{L}}_{_A}$ at $H$, by changing $\psi$ and $\Phi$ we could recover a function $F$, in different spaces $\mathcal{H}_{U,\phi,\Phi}$, from the same sequence of samples.

\item We can relax the initial assumptions in this section by assuming that $\{U(t)\phi\}_{t\in G}$ is just a complete  Bessel family for $\mathcal{H}$ with respect to $(G, \mu_G)$. In this case the mapping $f\in \mathcal{H}_{\Phi} \longmapsto F_f\in \mathcal{H}_{U,\phi,\Phi}$ is injective and continuous but not necessarily an isomorphism (the subspace $\mathcal{H}_{U,\phi,\Phi}$ is not necessarily closed in $L^2(G)$). Under the hypotheses in Theorem \ref{sampteo}, the sampling formula \eqref{samf1} holds but the left-hand inequality in \eqref{stable} does not hold.
\end{enumerate}

\subsection{The case of a semi-direct product of groups}
\label{section4-3}
The case where the group $G$ is the {\em semi-direct product of two groups} can be easily reduced to the  situation described in Section \ref{section4} under appropriate conditions. Let $G=K\rtimes_\sigma H$ be the semi-direct product of the LCA  group $(K,+)$ and a not necessarily abelian group $(H, \cdot)$, where $\sigma$ denotes the action of the group $H$ on the group $K$, i.e., a homomorphism $\sigma: H \rightarrow Aut(K)$ mapping $h\mapsto \sigma_h$. The composition law in $G$ is 
$(k_1,h_1)\,(k_2,h_2):=(k_1+\sigma_{h_1}(k_2),h_1h_2)$ for $(k_1,h_1), \,(k_2,h_2) \in G$.  In general, the group $G=K\rtimes_\sigma H$ is not abelian. In case $\sigma_h\equiv Id_K$ for each $h\in H$ we recover the direct product group $G=K\times H$.

\medskip

Now we consider a subgroup $G'=K'\rtimes_\sigma H'$ where $(K',+)$ is a countable discrete group and $(H', \cdot)$ is a finite group of order $N$ (we will write $H'=\{h_1=1_H, h_2, \cdots, h_{_N}\}$), and such that $\sigma_h(K')=K'$ for each $h\in H'$.
Suppose that $(k,h)\longmapsto U(k,h)$ is a unitary representation of  the group $G=K\rtimes_\sigma H$ on a separable Hilbert space $\mathcal{H}$. For a fixed $\varphi \in \mathcal{H}$, the corresponding $\mathcal{H}_\varphi$ subspace can be written as
\begin{equation}
\label{newspace}
\begin{split}
\mathcal{H}_\varphi&=\Big\{\sum_{(k,h)\in G'} x(k,h)\, U(k,h)\varphi \,:\, \{x(k,h)\}\in \ell^2(G')\Big\} \\
&=\Big\{\sum_{n=1}^N \sum_{k\in K'} x(k,h_n)\, U(k,1_H)U(0_K,h_n)\varphi  \Big\} \\
&=\Big\{\sum_{n=1}^N \sum_{k\in K'}x_n(k)\,U(k,1_H)\varphi_n \,:\, \{x_n\}\in \ell^2(K') \Big\}\,,
\end{split}
\end{equation}
where $x_n(k):=x(k,h_n)$, $k\in K'$, and $\varphi_n:=U(0_K,h_n)\varphi $, $n=1, 2, \dots, N$. That is, $\mathcal{H}_\varphi\equiv\mathcal{H}_\Phi$ where $\Phi=\{ \varphi_1, \varphi_2, \dots, \varphi_N\}$ is a set of $N$ generators in $\mathcal{H}$. Assume that $\{U(s,t)\phi\}_{(s,t)\in G}$ is a continuous frame for $\mathcal{H}$ with respect to $(G, \mu_G)$, and consider the corresponding $\mathcal{H}_{U,\phi,\Phi}$ space. For any function $F(s,t)=\big\langle f, U(s,t)\phi\big\rangle_\mathcal{H}$, $(s,t)\in G$, in $\mathcal{H}_{U,\phi,\Phi}$ we define a sampling procedure at $K'$ by
\[
\boldsymbol{\mathcal{L}}_{_A}F(k)=\big(\mathcal{L}_1F(k), \mathcal{L}_2F(k), \dots, \mathcal{L}_{_M}F(k) \big)^\top:=\big(A\ast_{_{K'}} \mathbf{x}\big)(k), \quad k\in K'\,,
\]
where $A=[a_{m,n}]\in  \mathcal{M}_{_{M\times N}}\big(\ell^2(K')\big)$ and $\mathbf{x}=(x_1, x_2, \dots, x_{_N})^\top \in \ell^2_{_N}(K')$. Under conditions in Definition \ref{sampdef}, there exist $M$ elements $\beta_m\in \mathcal{H}_\Phi$, $m=1, 2, \dots, M$, such that
\[
f=\sum_{m=1}^M \sum_{k\in K'} \mathcal{L}_mF(k)\, U(k,1_H)\,\beta_m \quad \text{in $\mathcal{H}_\Phi$}\,.
\]
Remind that the elements $\beta_m\in \mathcal{H}_\Phi$, $m=1, 2, \dots, M$, are obtained from a left-inverse 
$\widehat{B}(\xi)$ of $\widehat{A}(\xi)$, a.e. $\xi \in \widehat{K'}$, as in Theorem \ref{sampteo}.
Hence, for each $F\in \mathcal{H}_{U,\phi,\Phi}$ we have
\begin{equation}
\label{samf5}
\begin{split}
F(s,t)&=\big\langle f, U(s,t)\phi\big\rangle_\mathcal{H}=\sum_{m=1}^M \sum_{k\in K'}\mathcal{L}_mF(k) \big\langle U(k,1_H)\,\beta_m,U(s,t)\,\phi \big\rangle_\mathcal{H}\\
&=\sum_{m=1}^M \sum_{k\in K'}\mathcal{L}_mF(k) \big\langle \beta_m, U\big[(k,1_H)^{-1}(s,t)\big]\,\phi \big\rangle_\mathcal{H}\\
&=\sum_{m=1}^M \sum_{k\in K'}\mathcal{L}_mF(k)\, S_m(s-k,t)\,, \quad (s,t)\in G\,,
\end{split}
\end{equation}
where $S_m(s,t)=\big\langle \beta_m, U(s,t)\,\phi \big\rangle_\mathcal{H}$, $(s,t)\in G$, $m=1, 2, \dots, M$. Notice that 
$(k,1_H)^{-1}(s,t)=(-k,1_H)(s,t)=(s-k,t)$, for $s\in K$, $k\in K'$ and $t\in H$.
\subsubsection{Euclidean motion group and crystallographic subgroups}
An example of the above setting  is given by {\em crystallographic groups} as subgroups of the {\em Euclidean motion group} $E(d)$. This group is the semi-direct product $\mathbb{R}^d \rtimes_{\sigma} O(d)$ corresponding to the homomorphism $\sigma : O(d) \rightarrow Aut(\mathbb{R}^d)$ given by $\sigma_{\gamma}(x) = \gamma x$, where $\gamma \in O(d)$ and $x\in \mathbb{R}^d$;  $O(d)$ denotes the orthogonal group of order $d$. The composition law on $E(d) = \mathbb{R}^d \rtimes_{\sigma} O(d)$ reads $(x, \gamma) \cdot  (x', \gamma') = (x + \gamma x', \gamma \gamma')$.

\medskip

The subgroup $G'$ would be the crystallographic group $\mathcal{C}_{P,\Gamma}:=P\mathbb{Z}^d \rtimes_\sigma \Gamma$ where $P$ is a non-singular $d\times d$ matrix and $\Gamma$ is a finite subgroup of $O(d)$ of order $N$ such that  
$\gamma(P\mathbb{Z}^d)=P\mathbb{Z}^d$ for each $\gamma \in \Gamma$. We will denote $\{\gamma_1=I, \gamma_2, \dots, \gamma_N\}$ the elements of the group $\Gamma$.
In this example we consider the {\em quasi regular representation} (see Ref.~\cite{barbieri:15}) on $L^2(\mathbb{R}^d)$:
\[
U(s,\gamma)f(t)=f[\gamma^{\top}(t-s)]\,,\quad \text{$t, s\in \mathbb{R}^d $, $\gamma\in O(d)$ and $f\in L^2(\mathbb{R}^d)$}\,.
\]
Assume that $\phi \in L^2(\mathbb{R}^d)$ is a function such that the family $\{U(s,\gamma)\phi\}_{(s,\gamma)\in E(d)}$ is a continuous frame for a closed subspace $\mathcal{H}$ of $L^2(\mathbb{R}^d)$ (containing $\mathcal{H}_\varphi$) with respect to $(E(d), ds\,d\mu(\gamma))$, where $d\mu(\gamma)$ denotes the left Haar measure on the group $O(d)$. For instance we could take $\phi$ a bandlimited function to a compact set $\Omega \subset \mathbb{R}^d$ and consider $\mathcal{H}:=PW_\Omega$; the details are similar to those in Ref.~\cite{garcia:20}. See Ref.~\cite{barbieri:15} for the details on the left Haar measure in semi-direct products of groups.

\medskip

For any function $F(s,\gamma)=\big\langle f(\cdot), \phi(\gamma^\top(\cdot-s))\big\rangle_{L^2(\mathbb{R}^d)}$, $(s,\gamma) \in E(d)$, with $f\in \mathcal{H}_\Phi$, we consider a stable sampling procedure $\boldsymbol{\mathcal{L}}_{_A}F(p):=(A\ast_{_{P\mathbb{Z}^d}} \mathbf{x})(p)$, $p\in P\mathbb{Z}^d$, defined at the lattice $P\mathbb{Z}^d$ as in Definition \ref{sampdef}. Remind that in this particular example the space defined in \eqref{newspace} is: 
\[
\mathcal{H}_\Phi=\Big\{\sum_{n=1}^N \sum_{p\in P\mathbb{Z}^d} x_n(p)\,\varphi_n(t-p) \,:\, \{x_n\}\in \ell^2(P\mathbb{Z}^d) \Big\}\,, 
\]
with $N$ generators $\varphi_n(t)=\varphi(\gamma_n^\top t)$, $n=1, 2, \dots, N$, in $L^2(\mathbb{R}^d)$.
The corresponding sampling formula \eqref{samf5} reads:
\[
F(s, \gamma)=\sum_{m=1}^M \sum_{p\in P\mathbb{Z}^d}\mathcal{L}_mF(p)\, S_m(s-p,\gamma)\,, \quad (s, \gamma)\in E(d)\,,
\]
where $S_m(s, \gamma)=\big\langle \beta_m(\cdot), \phi[\gamma^\top(\cdot-s)]\big\rangle_{L^2(\mathbb{R}^d)}$, 
$(s, \gamma)\in E(d)$, for some functions $\beta_m \in \mathcal{H}_\Phi$, $m=1, 2, \dots,M$, obtained from a left-inverse of the matrix $\widehat{A}(\xi)$ as in Theorem \ref{sampteo}.

\medskip

Whenever $f=\sum_{n=1}^N \sum_{p\in P\mathbb{Z}^d} x_n(p)\,\varphi_n(t-p)\in \mathcal{H}_\Phi$, for the pointwise samples $F(p, I)=\big\langle f(\cdot), \phi(\cdot-p)\big\rangle_{L^2(\mathbb{R}^d)}$, $p\in P\mathbb{Z}^d$ we have the expression $F(p, I)=\sum_{n=1}^N (a_{1,n} \ast x_n)(p)$, $p\in P\mathbb{Z}^d$, where 
\[
a_{1,n}(k)=\big\langle \varphi_n(\cdot), \phi(\cdot-k)\big\rangle_{L^2(\mathbb{R}^d)}=\big\langle \varphi(t), \phi(\gamma_n t-k)\big\rangle_{L^2(\mathbb{R}^d)}\,, \,\, \text{$k\in P\mathbb{Z}^d$ and $n=1, 2, \dots, N$}. 
\]
\subsection{Some final comments}
\label{section4-4}
The theory obtained in this work relies on the use of an LCA group $G$. We have considered non-abelian groups which are semi-direct product of groups; the case treated here can be reduced to the abelian case by increasing the number of generators in the auxiliary space. Formally, the general non-abelian case can be handled in the same way but necessarily it will need other additional mathematical tools (see, for instance, Refs.~\cite{barbieri:15,skret:20}). 

\bigskip

As an example, consider the (positive) {\em affine group} $G_+=\{ (a,b)\,:\, a>0\,, \, b\in \mathbb{R} \}$ with composition law $(a,b)\cdot(a',b')=(aa', b+ab')$, and its unitary representation $(a,b)\mapsto U(a,b)$ on $L^2(\mathbb{R})$ given by 
\[
\big[ U(a,b)f\big](t)=\frac{1}{\sqrt{a}}f\Big(\frac{t-b}{a} \Big)\,, \,\, \text{$t\in \mathbb{R}$, \,\,where $f\in L^2(\mathbb{R})$}\,. 
\]
The non-abelian group $G_+$ is non-unimodular with left Haar measure $d\mu_{_l}=\frac{dadb}{a^2}$.

Let  $\phi$ be a function in $L^2(\mathbb{R})$ such that $\{U(a,b)\phi\}_{(a,b)\in G_+}$ is a continuous frame for 
$ L^2(\mathbb{R})$, and let $\{\psi_{m,n}\}_{m,n\in \mathbb{Z}}$ be an orthonormal basis of wavelets for $L^2(\mathbb{R})$ where we use the notation  $\psi_{m,n}(t)=2^{-m/2}\psi \big(\frac{t-n}{2^m} \big)$, $m,n\in  \mathbb{Z}$.

\medskip

Now we sample any function $F(a,b)=\big\langle f, U(a,b)\phi \big\rangle_{L^2(\mathbb{R})}$, $(a,b)\in G_+$, where $f\in L^2(\mathbb{R})$, at the subspace $\Gamma:=\{(2^m,n)\,:\, m,n \in \mathbb{Z}\}$ of $G_+$. The functions $F$ defined above form a RKHS contained in $L^2(\mathbb{R}^+\times \mathbb{R}\,; \frac{dadb}{a^2})$.

\medskip

As in Section \ref{section3-1}, the samples $\{F(2^m,n)\}_{m,n\in  \mathbb{Z}}$ can be expressed as a discrete convolution in $\ell^2(\Gamma)$. A straightforward computation gives:
\[
F(2^m,n)=\sum_{p,q} \big\langle \psi, U\big[(2^p,q)^{-1}(2^m,n) \big]\phi \big\rangle_{L^2(\mathbb{R})}\, \big\langle f, \psi_{p,q} \big\rangle_{L^2(\mathbb{R})}=\big(\mathbf{a}\ast_\Gamma \mathbf{b}\big)(2^m,n)\,,\,\, (2^m,n)\in \Gamma \,,
\]
where $\mathbf{a}(m,n)=\big\langle \psi, U\big[(m,n) \big]\phi \big\rangle_{L^2(\mathbb{R})}$ and $\mathbf{b}(m,n)=\big\langle f, \psi_{m,n} \big\rangle_{L^2(\mathbb{R})}$, $(2^m,n)\in \Gamma$. The mathematical techniques used in Section \ref{section2-2} do not work for the non-abelian group $G_+$, and other mathematical techniques are necessary (see, for instance, Refs.~\cite{barbieri:15,skret:20}).

\bigskip

Another related classical problem is the following: let $\mathcal{H}_k$ be a RKHS of continuous functions $f:\mathbb{R} \rightarrow \mathbb{C}$ contained in $L^2(\mathbb{R})$ with reproducing kernels $\{k_x\}_{k\in \mathbb{R}}$. Assume that there exists a Riesz basis for $\mathcal{H}_k$ having the form $\{\varphi_n(t-kN)\}_{k\in \mathbb{Z};\,n=1, 2,\dots,N}$, and we want to recover any function $f\in \mathcal{H}_k$ from the sequences of its samples $\{f(kN+r)\}_{k\in \mathbb{Z};\,r=0,1,\dots, N-1}$ in a stable way. For the sample $f(kN+r)=\big\langle f, k_{_{kN+r}} \big\rangle_{L^2(\mathbb{R})}$, $k\in  \mathbb{Z}$ and $r=0,1,\dots, N-1$, we have the expression:
\[
\begin{split}
f(kN+r)&=\big\langle f, k_{_{kN+r}} \big\rangle_{L^2(\mathbb{R})}=\Big\langle \sum_{n=1}^N \sum_{m\in \mathbb{Z}} x_n(mN)\, \varphi_n(\cdot-mN), k_{_{kN+r}} \Big\rangle_{L^2(\mathbb{R})}\\
&=\sum_{n=1}^N \sum_{m\in \mathbb{Z}} x_n(mN)\, \varphi_n(kN+r-mN)=
\sum_{n=1}^N \big(a_{r,n} \ast_{_{N\mathbb{Z}}} x_n\big)(kN)\,,
\end{split}
\]
where $a_{r,n}(mN)=\varphi_n(mN-r)$, $m\in \mathbb{Z}$, for $n=1, 2,\dots,N$. Under the conditions in Definition \ref{sampdef} for the $N\times N$ matrix $A=[a_{r,n}]$, there exist $N$  sampling functions $S_r\in \mathcal{H}_k$, $r=0,1,\dots, N-1$, such that the sampling formula 
\[
f(t)=\sum_{r=0}^{N-1}\sum_{k\in  \mathbb{Z}} f(kN+r)\, S_r(t-kN)\,, \quad t\in \mathbb{R}\,,
\]
holds in $\mathcal{H}_k$. The sequence $\{S_r(\cdot-kN)\}_{_{k\in \mathbb{Z};\,r=0,1,\dots, N-1}}$ is a Riesz basis for 
$\mathcal{H}_k$.

\bigskip

Finally to say that there is some affinity of the approach followed in this work with the topic of dynamical sampling (see, for instance, Ref.~\cite{aldroubi:20} and references therein). Indeed, from the correlation between a continuous frame $\{U(t)\phi\}_{t\in G}$ and an element $f$ in a suitable Hilbert space $\mathcal{H}_\Phi$ we obtain a function $F(t)$ in $L^2(G)$. In the case studied here, assuming that the space $\mathcal{H}_\Phi$ has a discrete unitary structure and under appropriate conditions, the function $F$ can be recovered, in a stable way, from a finite number of data sequences. An important difference is that dynamical sampling approach relies on a semigroup structure rather than on a group one.

\bigskip

\noindent{\bf Acknowledgments:}
The author thanks {\em Universidad Carlos III de Madrid} for granting him a sabbatical year in 2020-21.
This work has been supported by the grant MTM2017-84098-P from the Spanish {\em Ministerio de Econom\'{\i}a y Competitividad (MINECO)}.


\end{document}